\newtheorem{theorem}{Theorem}[section]
\newtheorem{lemma}[theorem]{Lemma}
\newtheorem{Proposition}[theorem]{Proposition}
\theoremstyle{definition}
\newtheorem{remark}[theorem]{Remark}
\begin{document}

\date{}

\title[Improved sum-product estimates]
{Slightly improved sum-product estimates in fields of prime order}

\author{Liangpan Li}

\address{Department of Mathematics, Shanghai Jiao Tong University,
Shanghai 200240,  China $\&$ Department of Mathematics, Texas State
University, San Marcos, Texas 78666, USA }
 \email{liliangpan@yahoo.com.cn}

\subjclass[2000]{11B75}

\keywords{sum-product estimates}

\date{}

\begin{abstract}
Let $\mathbb{F}_p$ be the field of residue classes modulo a prime
number $p$ and let $A$ be a nonempty subset of $\mathbb{F}_p$. In
this paper we show that if $|A|\preceq p^{0.5}$, then
\[
\max\{|A\pm A|,|AA|\}\succeq|A|^{13/12};\]
if $|A|\succeq p^{0.5}$,
then
\[ \max\{|A\pm A|,|AA|\}\succapprox
\min\{|A|^{13/12}(\frac{|A|}{p^{0.5}})^{1/12},|A|(\frac{p}{|A|})^{1/11}\}.\]
These results slightly improve the estimates of Bourgain-Garaev and
Shen. Sum-product estimates on different sets are also considered.

\end{abstract}

\maketitle

\section{Introduction}

Let $\mathbb{F}_p$ be the field of residue classes modulo a prime
number $p$ and let $A, B$ be two nonempty subsets of $\mathbb{F}_p$.
Define the sum set, difference set and  product set of $A$ and $B$
respectively by
\begin{align*}
A+B&=\{a+b: a\in A, b\in B\},\\
A-B&=\{a-b: a\in A, b\in B\},\\
AB&=\{ab: a\in A, b\in B\}.
\end{align*}
From the work of Bourgain, Katz, Tao \cite{BourgainKatzTao} and
Bourgain, Glibichuk, Konyagin \cite{BourgainGlibichukKonyagin}, it
is known that if $|A|\preceq p^{\delta}$ (see Section 2 for the
definitions of $\preceq,\succeq,\precapprox,\succapprox$ and
$\sim$), where $\delta<1$, then one has the sum-product estimate
\[\max\{|A+A|,|AA|\}\succeq|A|^{1+\epsilon}\]
for some $\epsilon=\epsilon(\delta)>0$. These kinds of results have
found many important applications in various areas of mathematics,
and people want to know some quantitative relationships between
$\delta$ and $\epsilon$ in certain ranges of $|A|$. For the case
$|A|\succeq p^{0.5}$, the pioneer work was due to Hart, Iosevich and
Solymosi \cite{HartIosevichSolymosi} via Kloosterman sums. See
\cite{chang0,Garaevlargesubset,Vu} for further improvements. Note
also all lower bounds in
\cite{chang0,Garaevlargesubset,HartIosevichSolymosi,Vu} are trivial
if $|A|\sim p^{0.5}$.

 In the very beginning of the
story for the case  $|A|\preceq p^{0.5}$, Garaev
\cite{Garaevoriginal} proved
\[\max\{|A+A|,|AA|\}\succapprox|A|^{15/14},\]
which was immediately improved by Katz and Shen \cite{KatzShen} with
a refinement of the Pl\"{u}nnecke-Ruzsa inequality to
\[\max\{|A+A|,|AA|\}\succapprox|A|^{14/13}.\]
Later on, Bourgain and Garaev \cite{BourgainGaraev} considered
difference-product estimates and proved
\begin{equation}\label{bourgain}
\max\{|A-A|,|AA|\}\succeq\frac{|A|^{13/12}}{(\log_2|A|)^{4/11}},
\end{equation}
which was slightly improved  by Shen \cite{Shen,Shen2} with elegant
covering arguments to
\begin{equation}\label{shen}
\max\{|A\pm A|,|AA|\}\succeq\frac{|A|^{13/12}}{(\log_2|A|)^{1/3}}.
\end{equation}
With a technique of Chang \cite{Changmeichu}, we can completely drop
the logarithmetic term
 in (\ref{shen}).

\begin{theorem}\label{Theorem1312}
 Suppose $A\subset \mathbb{F}_p$ with $|A|\preceq p^{0.5}$. Then
\[ \max\{|A\pm A|,|AA|\}\succeq|A|^{13/12}.\]
\end{theorem}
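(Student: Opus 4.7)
The plan is to follow the Bourgain--Garaev/Shen approach via multiplicative energy, replacing the dyadic pigeonholing step (the sole source of the $(\log_2|A|)^{1/3}$ factor in (\ref{shen})) with Chang's weighted technique from \cite{Changmeichu}. The hypothesis $|A|\preceq p^{1/2}$ ensures that modular arithmetic does not interfere with the Garaev-type incidence counts used below. Writing $K=\max\{|A\pm A|,|AA|\}/|A|$ so that $|A+A|,\,|A-A|,\,|AA|\leq K|A|$, the target is equivalent to $K\succeq|A|^{1/12}$.

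First I would record the Cauchy--Schwarz lower bound on the multiplicative energy,
\[ E_\times(A)\;=\;\sum_{x\in AA}r_{AA}(x)^2\;\geq\;\frac{|A|^4}{|AA|}\;\geq\;\frac{|A|^3}{K}. \]
Next I would seek an upper bound of the form $E_\times(A)\preceq|A\pm A|^{\alpha}|AA|^{\beta}|A|^{\gamma}$ by rewriting $E_\times(A)$ as a count of quadruples $(a,b,c,d)\in A^4$ with $ab=cd$ and then applying the Katz--Shen refinement of the Pl\"unnecke--Ruzsa inequality to iterated sumsets $nA-mA$, together with the covering arguments of Shen \cite{Shen,Shen2}. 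Comparing the two bounds and solving the resulting polynomial inequality should give $K\succeq|A|^{1/12}$.

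The main obstacle lies at the covering/pigeonhole interface. In Shen's approach one isolates a dyadic level set $P_\tau=\{x\in AA:r_{AA}(x)\sim\tau\}$ that carries a $\succeq 1/\log|A|$ fraction of $E_\times(A)$ and performs the covering argument on $P_\tau$; summing over the $O(\log|A|)$ dyadic scales produces the logarithmic factor. To apply Chang's trick I would instead keep the multiplicities $r_{AA}(x)$ intact and process the energy through a weighted Pl\"unnecke--Ruzsa/covering step by using H\"older's inequality in one shot, so that no single level $\tau$ need be singled out. The delicate point is to choose the H\"older exponents so that the multiplicity weights are absorbed cleanly and the weighted covering inequality closes at exactly the same exponent $13/12$ that Shen obtained only up to logarithms.
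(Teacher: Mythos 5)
Your outline does follow the same road as the paper (energy lower bound, Katz--Shen refinement of Pl\"unnecke--Ruzsa to control $|Z\pm Z\pm Z\pm Z|$, Shen's covering argument on level sets, and Chang's device to kill the logarithm), but the proposal stops exactly at the point where the work is: you write that ``the delicate point is to choose the H\"older exponents so that \dots the weighted covering inequality closes at exactly the same exponent $13/12$,'' and that point is not resolved. Worse, the mechanism you propose --- keeping the multiplicities $r_{AA}(x)$ intact and applying ``H\"older's inequality in one shot'' --- is not how the logarithm is actually removed, and if implemented literally it fails to remove it. The covering argument is intrinsically a statement about \emph{sets}: for each level set $Z_j=\{z: |z_0Z\cap zZ|\in N_j\}$ it yields $16^j|Z_j|^3\preceq |Z\pm Z|^5\,|Z\pm Z\pm Z\pm Z|$, and there is no known ``weighted covering'' version that processes the multiplicities directly. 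So one must still decompose into level sets and recombine. A one-shot H\"older on the recombination, say
\[
\sum_j 2^j|Z_j|=\sum_j\bigl(2^j|Z_j|^{3/4}\bigr)|Z_j|^{1/4}\le\Bigl(\max_j 2^j|Z_j|^{3/4}\Bigr)\sum_j|Z_j|^{1/4},
\]
leaves the factor $\sum_j|Z_j|^{1/4}\preceq(\log|Z|)^{3/4}|Z|^{1/4}$ (there are $\preceq\log|Z|$ levels), so you only improve Shen's $(\log|A|)^{1/3}$ to some smaller power of $\log|A|$; you do not eliminate it.

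The missing idea --- which is the actual content of Chang's trick, stated as Lemma \ref{lemmachang} in the paper --- is a \emph{second} dyadic decomposition: group the indices $j$ according to the dyadic size of $|Z_j|$ itself, observe that within each group the sum $\sum 2^j$ telescopes to its largest term $2^{j_s}$, and then the sum over groups $\sum_s 2^{s/4}$ is a convergent geometric series bounded by its top term $\preceq|Z|^{1/4}$, with no logarithm. This gives $\max_j16^j|Z_j|^3\succeq E^{\times}(Z,Z)^4/(|Z|^4\cdot|Z|)$, and the extra factor $|Z|$ is affordable precisely because the covering bound controls $16^j|Z_j|^3$ (not $16^j|Z_j|^4$); chaining this with $E^{\times}(Z,Z)\ge|Z|^4/|ZZ|$ and $|Z\pm Z\pm Z\pm Z|\preceq|A\pm A|^3/|A|^2$ gives $|A\pm A|^8|AA|^4\succeq|A|^{13}$, hence the exponent $13/12$. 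Without this regrouping step, and without the explicit exponent bookkeeping, the argument as proposed does not close.
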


Note  the Bourgain-Garaev estimate (\ref{bourgain}) also holds for
$p^{0.5}\preceq|A|\preceq p^{12/23}$ (see \cite{BourgainGaraev}). In
these ranges and beyond, our next result says that:

\begin{theorem}\label{Theoremlarge}
 Suppose $A\subset \mathbb{F}_p$ with $|A|\succeq p^{0.5}$. Then
\[ \max\{|A\pm A|,|AA|\}\succapprox\min\{|A|^{13/12}(\frac{|A|}{p^{0.5}})^{1/12},|A|(\frac{p}{|A|})^{1/11}\}.\]
\end{theorem}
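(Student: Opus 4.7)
The plan is to rerun the argument of Theorem \ref{Theorem1312} in the range $|A|\succeq p^{1/2}$, carefully tracking the $p$-dependence that enters through the underlying incidence estimate, and to hand off to the Bourgain--Garaev large-set argument once the $p$-adjusted bound becomes trivial.

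\textbf{First entry in the minimum.} I would set $M:=\max\{|A\pm A|,|AA|\}$ and assume for contradiction that $M$ violates the claim. The proof of Theorem \ref{Theorem1312} combines the Pl\"unnecke--Ruzsa inequality (in the Katz--Shen refinement, together with Chang's dyadic trick) with a third-moment/energy identity, and reduces matters to an incidence count between a set of points and a set of lines in $\mathbb{F}_p^2$, to which a Szemer\'edi--Trotter type bound $I(P,L)\preceq(|P||L|)^{3/4}+|P|+|L|$ is applied. Once $|A|\succeq p^{1/2}$ this bound is no longer sharp, and must be replaced by its prime-field analogue
\[
I(P,L)\preceq (|P||L|)^{3/4}+\frac{|P||L|}{p^{1/2}}+|P|+|L|.
\]
The extra $|P||L|/p^{1/2}$ term will propagate through the Pl\"unnecke--Ruzsa bookkeeping. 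Reoptimising the parameters with this correction converts the $|A|^{13/12}$ output of Theorem \ref{Theorem1312} into $|A|^{13/12}(|A|/p^{1/2})^{1/12}$, which is the first quantity inside the minimum.

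\textbf{Second entry in the minimum.} For $|A|$ close enough to $p$ the first bound becomes weaker than what a direct large-set argument gives, and I would instead revisit the Bourgain--Garaev proof of (\ref{bourgain}) in the range $|A|\succeq p^{1/2}$. Assuming that both $|AA|$ and $|A\pm A|$ are small multiples of $|A|$, a Pl\"unnecke--Ruzsa covering step reduces matters to counting additive--multiplicative configurations whose count saturates modulo $p$; balancing this saturation against the trivial count and applying Chang's trick to remove the logarithmic factor produces $\max\succapprox|A|(p/|A|)^{1/11}$. Taking the minimum of the two bounds then gives Theorem \ref{Theoremlarge}.

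\textbf{Main obstacle.} The technical heart of the argument is the first step: the new $|P||L|/p^{1/2}$ term in the incidence bound must combine with the several Pl\"unnecke--Ruzsa invocations in exactly the right way to produce the exponent $1/12$ in $(|A|/p^{1/2})^{1/12}$, rather than some worse exponent. This amounts to a careful redo of the bookkeeping in Theorem \ref{Theorem1312} parameterised by the additional ratio $|A|/p^{1/2}$, and the principal risk is picking up avoidable losses at the non-incidence steps (in particular at the Plu\"unnecke--Ruzsa and dyadic-pigeonholing stages) which would weaken this exponent.
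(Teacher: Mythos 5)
There is a genuine gap, and it starts with a misidentification of the engine behind Theorem \ref{Theorem1312}. That proof (and the whole Garaev/Katz--Shen/Bourgain--Garaev/Shen line it refines) contains no point-line incidence count at all: it is built from the covering lemma, the Katz--Shen refinement of Pl\"unnecke--Ruzsa, the Glibichuk--Konyagin lemma on sums of dilates, and Chang's dyadic pigeonholing. The Szemer\'edi--Trotter type bound $I(P,L)\preceq(|P||L|)^{3/4}+|P||L|/p^{1/2}+|P|+|L|$ that your ``first entry'' step rests on is not a theorem over $\mathbb{F}_p$ (no bound with the Euclidean exponent $3/4$ for general point/line sets is known there), so the proposed mechanism for producing the factor $(|A|/p^{1/2})^{1/12}$ has no foundation. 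Your ``second entry'' step is likewise only a gesture: ``counting configurations whose count saturates modulo $p$'' does not identify what is actually counted or why $p$ enters with the exponent $1/11$.

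The missing idea is a dichotomy on the popular dyadic class, driven by Lemmas \ref{Konyaginababcd} and \ref{lemmallp}. One first repeats the setup of Section 3 to get $Z\subset A$ with $|Z\pm Z\pm Z\pm Z|\preceq|A\pm A|^3/|A|^2$ and then pigeonholes a class $Z_{j_0}$ with $2^{j_0}|Z_{j_0}|\succapprox|A|^3/|AA|$. If $\frac{Z_{j_0}-Z_{j_0}}{Z_{j_0}-Z_{j_0}}\neq\mathbb{F}_p$, Lemma \ref{Konyaginababcd} forces $|Z_{j_0}|\preceq p^{1/2}$; multiplying the Shen-type estimate $16^{j_0}|Z_{j_0}|^3\preceq|Z\pm Z|^5\cdot|Z\pm Z\pm Z\pm Z|$ by the extra factor $|Z_{j_0}|\preceq p^{1/2}$ and comparing with $(2^{j_0}|Z_{j_0}|)^4\succapprox|A|^{12}/|AA|^4$ gives $|A\pm A|^8|AA|^4\succapprox|A|^{14}/p^{1/2}$, which is exactly the first term of the minimum --- so $p^{1/2}$ enters through the Glibichuk--Konyagin size constraint, not through an incidence bound. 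If instead $\frac{Z_{j_0}-Z_{j_0}}{Z_{j_0}-Z_{j_0}}=\mathbb{F}_p$ and $|Z_{j_0}|\succeq p^{1/2}$, Lemma \ref{lemmallp} produces a ratio of differences $\xi$ with $|Z'+\xi Z'|\succeq p$, and the covering argument yields $p\preceq(|Z\pm Z|/2^{j_0})^4\cdot|Z\pm Z\pm Z\pm Z|$, hence $|A\pm A|^7|AA|^4\succapprox|A|^{10}p$, the second term. Without this case split and these two lemmas, neither exponent in the statement can be recovered.
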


To compare, if $|A|\preceq p^{12/23}$, then
$|A|^{13/12}\preceq|A|(\frac{p}{|A|})^{1/11}$. Particularly, if
$|A|\sim p^{35/68}$, then $ \max\{|A\pm
A|,|AA|\}\succapprox|A|^{38/35}$. This shows Theorem
\ref{Theoremlarge} is an improvement of (\ref{bourgain}).

Similarly, we may consider sum-product estimates on different sets
in $\mathbb{F}_p$. Bourgain \cite{Bourgaindifferentsets} proved that
if $p^{1-\delta}\preceq|B|\preceq|A|\preceq p^{\delta}$, then for
some $\epsilon=\epsilon(\delta)>0$,
\[ \max\{\frac{|A+B|}{|A|},\frac{|AB|}{|A|}\}\succeq p^{\epsilon}.\]
Shen \cite{Shendifferentsets} quantitatively proved that (a) if
$|B|\preceq|A|\preceq p^{0.5}$, then
\[ \max\{\frac{|A+B|}{|A|},\frac{|AB|}{|A|}\}\succapprox (\frac{|B|^{14}}{|A|^{13}})^{1/18};\]
(b) if $|B|\sim|A|\preceq p^{0.5}$, then
\[ \max\{|A+B|,|AB|\}\succapprox|A|^{15/14}.\]
 We can also give some improvements.
\begin{theorem}\label{main11}
 Suppose $A,B\subset \mathbb{F}_p$ with $|B|\preceq|A|\preceq p^{0.5}$. Then
\[ \max\{\frac{|A+B|}{|A|},\frac{|AB|}{|A|}\}\succapprox(\frac{|B|^{6}}{|A|^{5}})^{1/14}.\]
\end{theorem}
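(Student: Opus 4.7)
The plan is to assume that both $|A+B| \leq K|A|$ and $|AB| \leq K|A|$ for some parameter $K$, and then to derive the inequality $K \succapprox (|B|^{6}/|A|^{5})^{1/14}$, which is equivalent to the stated bound on the maximum. The argument will parallel the proof of Theorem~\ref{Theorem1312} but with asymmetric bookkeeping for the two sets $A$ and $B$; the key new ingredient, which recovers a cleaner exponent than Shen's $(|B|^{14}/|A|^{13})^{1/18}$ in \cite{Shendifferentsets}, will be Chang's technique from \cite{Changmeichu}, used to eliminate the logarithmic losses one would otherwise incur when passing to popular subsets.

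The first technical step is to apply the Pl\"unnecke--Ruzsa inequality, in its asymmetric form, to both hypotheses, producing bounds on iterated mixed sum sets of the shape $|A + kB - lB| \leq K^{k+l+1}|A|$ and on iterated product sets of the shape $|A B^{k}B^{-l}| \leq K^{k+l+1}|A|$, together with Ruzsa-triangle consequences such as $|kB - lB| \leq K^{k+l}|A|^{k+l}/|B|^{k+l-1}$ and the multiplicative analogue $|B^{k}/B^{l}| \leq K^{k+l}|A|^{k+l}/|B|^{k+l-1}$. These convert the small-doubling hypotheses of $A$ against $B$ into genuine structural information about sets built from $B$ alone. The second technical step is a double counting of Elekes type, carried out for a suitable mixed sum-product equation — e.g.\ counting incidences between the family of lines $y = a(x - b)$ indexed by $(a,b)\in A\times B$ and the point set $(A+B)\times AB$. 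A trivial lower bound of order $|A||B|^{2}$ on the number of incidences is compared with an upper bound furnished by the Bourgain--Katz--Tao point-line incidence theorem in $\mathbb{F}_p$ (which is non-trivial precisely in the range $|A| \leq p^{1/2}$), combined with the Pl\"unnecke estimates from the first step to control the cardinalities of the point set and line set. Chang's method is then invoked to pass to a large subset of $B$ on which the doubling constants are sharp, removing the logarithmic factors that would otherwise appear.

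The main obstacle will be the second step: the precise choice of mixed configuration whose incidence count yields the target exponent $(|B|^{6}/|A|^{5})^{1/14}$. Because $|A|$ and $|B|$ enter asymmetrically, one must track carefully how many copies of each set appear in each iterated sum or product, and the combinatorial balancing that produces precisely the pair $(6,5)$ in the numerator and the denominator $14$ is the delicate part of the argument. Once the correct configuration is identified, the remaining computation is a routine, if somewhat tedious, exercise in the Pl\"unnecke--Ruzsa calculus, following the same outline by which Theorem~\ref{Theorem1312} refined (\ref{shen}).
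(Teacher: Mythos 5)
Your central tool is the wrong one, and the step you yourself flag as ``the main obstacle'' is exactly where the argument breaks. The Bourgain--Katz--Tao point-line incidence theorem over $\mathbb{F}_p$ is qualitative: it bounds incidences by $N^{3/2-\epsilon}$ for an unspecified (and in \cite{BourgainKatzTao} ineffective) $\epsilon>0$, so no Elekes-type double count routed through it can output a concrete exponent such as $(|B|^{6}/|A|^{5})^{1/14}$. This is precisely why Garaev, Katz--Shen, Bourgain--Garaev and Shen abandoned the incidence route in favour of an energy/covering route, and the present paper follows them; at the time there was no quantitative Szemer\'edi--Trotter analogue in $\mathbb{F}_p$ to substitute. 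So ``identifying the correct mixed configuration'' cannot rescue the plan: what is missing is not a configuration but a quantitative incidence bound that does not exist in this setting. Your proposal is therefore a gap, not a proof --- the only step actually carried out is the harmless reduction to bounding $K$ with $|A+B|,|AB|\le K|A|$.

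For comparison, the paper's proof fixes $a_0\in A$ with $\sum_{a\in A}|aB\cap a_0B|\ge |A||B|^2/|AB|$, decomposes $A$ into dyadic level sets $A_j=\{a:|aB\cap a_0B|\in N_j\}$, and for a popular level $j_0$ (pigeonhole, costing only the logarithm absorbed by $\succapprox$) proves $16^{j_0}|A_{j_0}|^3\preceq |A+B|^{10}/(|A|^3|B|)$, i.e.\ Proposition \ref{3333proposition}(c). That inequality rests on Lemma \ref{Konyaginababcd} (the Glibichuk--Konyagin bound $|(b-a)E+(b-a)E+(d-c)E|\succeq|A_{j_0}|^2$), the covering Lemma \ref{coverlemma} applied to the fibers $aB\cap a_0B$, and the Katz--Shen refinement of Pl\"unnecke--Ruzsa (Lemma \ref{ImprovedPlunneckeRuzsa}) --- none of which appear in your outline. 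Running the resulting inequality with the roles of $A$ and $B$ interchanged gives $|A+B|^{10}|AB|^4\succapprox |B|^6|A|^9$, which is exactly $K^{14}\succapprox |B|^6/|A|^5$. Two smaller corrections: Chang's lemma is not used in this theorem at all (the statement is only $\succapprox$, so the pigeonholing log is tolerated; Chang's device is what removes the log in Theorems \ref{Theorem1312} and \ref{main22}); and the iterated Pl\"unnecke bounds you list for multiplicative iterates $|AB^kB^{-l}|$ play no role anywhere in the argument.
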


\begin{theorem}\label{main33}
 Suppose $A,B\subset \mathbb{F}_p$ with $|A|\succeq|B|\succeq p^{0.5}$. Then
\[ \max\{\frac{|A+B|}{|A|},\frac{|AB|}{|A|}\}\succapprox\min\big\{
\big(\frac{|B|^7}{|A|^5p^{0.5}}\big)^{1/14},\big(\frac{|B|^3p}{|A|^4}\big)^{1/12}\big\}.\]
\end{theorem}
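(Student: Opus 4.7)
The plan is to adapt the Bourgain-Garaev framework used to establish Theorem \ref{Theoremlarge} to the asymmetric setting $|A|\geq|B|\succeq p^{0.5}$, building on the covering-plus-Pl\"{u}nnecke template behind Theorem \ref{main11}. Write $K=\max\{|A+B|/|A|,|AB|/|A|\}$, so that $|A\pm B|\leq K|A|$ and $|AB|\leq K|A|$; the task is to bound $K$ from below by the claimed minimum.

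First I would apply the Pl\"{u}nnecke-Ruzsa inequality in asymmetric form to obtain polynomial control on various iterated sum and product expressions built from $A$ and $B$. Because the two sets have different sizes, factors of $|A|/|B|$ enter naturally and are responsible for the $|B|^k/|A|^k$ ratios in the final bound. Next I would introduce a joint additive/multiplicative counting quantity $N$, for instance the number of solutions to $a_1 b_1+a_2 b_2 = a_1 b_3+a_2 b_4$ with $a_i\in A$ and $b_j\in B$, or a related six-variable energy, and derive a lower bound on $N$ via Cauchy-Schwarz using that $|AB+AB|$ is controlled by a small power of $K$ times $|A|$.

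Then I would upper bound $N$ by two different mechanisms. A Szemer\'edi-Trotter/Vinh-type incidence estimate in $\mathbb{F}_p^2$ gives one bound, which is essentially the asymmetric analogue of the combinatorial estimate underlying Theorem \ref{main11} but now incorporating the prime-field gain available once $|A|,|B|\succeq p^{0.5}$; this should produce the term $(|B|^7/(|A|^5 p^{0.5}))^{1/14}$ and reduce to the Theorem \ref{main11} bound exactly at the threshold $|B|\sim p^{0.5}$. A Kloosterman/character-sum estimate, effective precisely when $|A||B|\succapprox p$, gives a second upper bound, an asymmetric analogue of the Garaev-type estimate $|A|(p/|A|)^{1/11}$ from Theorem \ref{Theoremlarge}; this yields the term $(|B|^3 p/|A|^4)^{1/12}$. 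Comparing both upper bounds against the Cauchy-Schwarz lower bound and solving for $K$ gives the claimed minimum.

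The main obstacle I anticipate is the careful asymmetric bookkeeping through these steps: pushing the Pl\"{u}nnecke-Ruzsa applications and the Vinh/character-sum estimates through with the correct weights so that the exponents come out to exactly $7,5$ in the first term and $3,4$ in the second. The Chang-type trick used to eliminate logarithms in Theorem \ref{Theorem1312} is presumably unnecessary here because the conclusion is already stated up to polylogarithmic factors via $\succapprox$.
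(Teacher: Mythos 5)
Your plan heads in a genuinely different direction from the paper's proof, and as described it has a gap that would prevent it from recovering the stated bounds. The theorem is nontrivial precisely down to the threshold $|A|\sim|B|\sim p^{0.5}$ (where it recovers $\max\{|A+B|,|AB|\}\succapprox|A|^{15/14}$, matching Theorem \ref{main22}), but the two mechanisms you propose for the upper bounds on $N$ --- a Vinh-type incidence estimate and a Kloosterman/character-sum estimate --- are both trivial in that range: the incidence bound in $\mathbb{F}_p^2$ only beats the trivial count when the point/line sets are substantially larger than $p$, and the introduction of the paper already notes that the character-sum bounds of Hart--Iosevich--Solymosi, Garaev, et al.\ degenerate at $|A|\sim p^{0.5}$. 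So solving for $K$ from those two upper bounds cannot produce the claimed minimum near the critical size. Relatedly, the structure of your argument (one energy $N$, two competing upper bounds) does not match where the two terms of the minimum actually come from.

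The paper's proof instead runs the Shen-style covering machine asymmetrically. One pigeonholes a dyadic level set $A_{j_0}\subset A$ with $|A_{j_0}|2^{j_0}\succapprox |A||B|^2/|AB|$ from the multiplicative energy between $A$ and $B$, and then splits according to whether $\frac{A_{j_0}-A_{j_0}}{A_{j_0}-A_{j_0}}$ is all of $\mathbb{F}_p$. In the first case Lemma \ref{Konyaginababcd} forces $|A_{j_0}|\preceq p^{0.5}$ and Proposition \ref{3333proposition}(a)/(c) (covering lemma plus Lemma \ref{ImprovedPlunneckeRuzsa} plus Pl\"unnecke--Ruzsa) gives $16^{j_0}|A_{j_0}|^3\preceq |A+B|^{10}/(|A|^3|B|)$, which after inserting $|A_{j_0}|\preceq p^{0.5}$ yields the exponent-$14$ term; in the second case Lemma \ref{lemmallp} (an elementary averaging over dilations $\xi\in\mathbb{F}_p^{*}$, not a character-sum bound) gives Proposition \ref{3333proposition}(b), $16^{j_0}\min\{|A_{j_0}|^2,p\}\preceq|A+B|^8/|A|^3$, which yields the exponent-$12$ term. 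The two entries of the minimum are thus the outcomes of a structural dichotomy on $A_{j_0}$, each proved by a single chain of covering/Pl\"unnecke inequalities. To repair your approach you would need to replace the incidence and character-sum inputs by the covering lemma, the Glibichuk--Konyagin expansion lemma, and the dilation-averaging Lemma \ref{lemmallp}, and organize the proof around the $\frac{A_{j_0}-A_{j_0}}{A_{j_0}-A_{j_0}}=\mathbb{F}_p$ dichotomy rather than around a single energy $N$.
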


\begin{theorem}\label{main22}
 Suppose $A,B\subset \mathbb{F}_p$ with $|B|\sim|A|\preceq p^{0.5}$. Then
\[ \max\{|A+B|,|AB|\}\succeq|A|^{15/14}.\]
\end{theorem}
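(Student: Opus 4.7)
The assertion of Theorem \ref{main22} is precisely the $|B|\sim|A|$ specialisation of Theorem \ref{main11}, since $(|B|^{6}/|A|^{5})^{1/14}$ collapses to $|A|^{1/14}$ in this regime; the only task is to sharpen the $\succapprox$ there into a $\succeq$. My plan is to combine a direct reprise of the Theorem \ref{main11} argument with the log-removal device of Chang \cite{Changmeichu} that converted Shen's bound (\ref{shen}) into the log-free Theorem \ref{Theorem1312}.

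Set $K=\max\{|A+B|,|AB|\}/|A|$ and suppose, for a contradiction, that $K\le c|A|^{1/14}$ for a sufficiently small absolute constant $c>0$. Because $|B|\sim|A|$, the Pl\"unnecke--Ruzsa inequality applied to $|A+B|\le K|A|$ yields $|mB-nB|\preceq K^{m+n}|A|$ for all $m,n\ge 1$, and its multiplicative counterpart applied to $|AB|\le K|A|$ yields $|B^{m}/B^{n}|\preceq K^{m+n}|A|$. Feeding these iterated bounds into the Bourgain--Garaev sum--product inequality in $\mathbb{F}_{p}$ (valid in the range $|A|\preceq p^{1/2}$) and balancing the exponents as in the proof of Theorem \ref{main11}, one reaches $K\succapprox|A|^{1/14}$ with only a polylogarithmic loss, which would contradict the assumed upper bound on $K$ were it not for that stray $\log|A|$.

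It remains to strip the logarithm. Following \cite{Changmeichu}, I would dyadically decompose a suitable representation function of $B$ (for example $x\mapsto\#\{(b_{1},b_{2})\in B\times B:b_{1}/b_{2}=x\}$) into $O(\log|A|)$ level sets, pass by the Pl\"unnecke--Ruzsa--Katz--Tao covering lemma to a sub-set $A'\subset A$ with $|A'|\succeq|A|$ concentrated on the heaviest level, and rerun the Bourgain--Garaev computation there without any averaging. Reassembling the level-set estimates produces the clean bound $\max\{|A+B|,|AB|\}\succeq|A|^{15/14}$. The main technical obstacle is verifying that every Cauchy--Schwarz step and every appeal to the $\mathbb{F}_{p}$ incidence bound in the proof of Theorem \ref{main11} admits a log-free surrogate after this dyadic reduction; the hypothesis $|B|\sim|A|$ is what makes this feasible, as it keeps the iterated sums and products of $B$ sufficiently symmetric for Chang's surgery to mirror its single-set incarnation in Theorem \ref{Theorem1312}.
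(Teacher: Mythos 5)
Your high-level diagnosis is right: Theorem \ref{main22} is the $|B|\sim|A|$ case of Theorem \ref{main11} with the logarithm removed, and Chang's device is indeed what removes it. But the implementation you describe of the log removal is precisely the step that fails. ``Pass to the heaviest dyadic level'' is the lossy pigeonhole that creates the $\log$ in the first place: from $\sum_j 2^j|A_j|\succeq |A||B|^2/|AB|$ over $O(\log|B|)$ levels, selecting the dominant level only gives $2^{j_0}|A_{j_0}|\succeq |A||B|^2/(|AB|\log|B|)$, which is exactly how Theorem \ref{main11} is proved and exactly why it carries a $\succapprox$. Moreover, your claim that the heaviest level supports a subset $A'\subset A$ with $|A'|\succeq|A|$ is false in general: the dominant $A_{j_0}$ can be very small, compensated by a large $2^{j_0}$. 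Finally, the auxiliary machinery you invoke (iterated Pl\"unnecke bounds on $|mB-nB|$ and $|B^m/B^n|$, an ``$\mathbb{F}_p$ incidence bound'') is not what the argument runs on; the engine is Proposition \ref{3333proposition}, i.e.\ Glibichuk--Konyagin (Lemma \ref{Konyaginababcd}) plus the covering and refined Pl\"unnecke--Ruzsa lemmas.

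The actual mechanism, which your sketch does not reach, is Lemma \ref{lemmachang}: a weighted H\"older inequality over the dyadic levels in which the count of levels is replaced by the geometric series $\sum_s 2^{s/4}\preceq |B|^{1/4}$, yielding the log-free maximal bound
\[
\max_j 16^j|A_j|^3\succeq\frac{E^{\times}(A,B)^4}{|A|^4|B|}\succeq\frac{|A|^{11}}{|AB|^4}\qquad(|B|\sim|A|).
\]
The extra factor $|B|^{1/4}$ is affordable because it is set against the \emph{cube} $|A_j|^3$ rather than a fourth power. This is then played off against Proposition \ref{3333proposition}(c), which gives the upper bound $16^j|A_j|^3\preceq |A+B|^{10}/(|A|^3|B|)$ \emph{uniformly in $j$} (applicable since $|A_j|\le|A|\preceq p^{0.5}$), so that no selection of a level is needed at all; comparing the two bounds gives $|A+B|^{10}|AB|^4\succeq|A|^{15}$ and hence the theorem. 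To repair your proposal you should replace the ``heaviest level'' reduction by this weighted maximal inequality.
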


\section{Notations and Lemmas}

Throughout this paper $A$ will denote a fixed nonempty set in
$\mathbb{F}_p$. For $B$, any set, we will denote by $|B|$ its
cardinality . Whenever $E$ and $F$ are quantities we will use
$E\preceq F$ or $F\succeq E$ to mean $E\leq CF$, where the constant
$C$ is universal (i.e. independent of $A$ and $p$). We will use
$E\precapprox F$ or $F\succapprox E$ to mean $E\leq
C(\log|A|)^{\alpha}F$, where the universal constants $C$ and
$\alpha$ may vary from line to line. Besides, $E\sim F$ means
$E\preceq F$ and $F\preceq E$.


For  $Y,Z\subset \mathbb{F}_p$, denote by $E^{+}(Y,Z)$
 the additive energy between  $Y$ and $Z$, that is,
\[E^{+}(Y,Z)=\sum_{x\in Y}\sum_{y\in Y}|(x+Z)\cap (y+Z)|;\]
denote by $E^{\times}(Y,Z)$
 the multiplicative energy between  $Y$ and $Z$, that is,
\[E^{\times}(Y,Z)=\sum_{x\in Y}\sum_{y\in Y}|xZ\cap yZ|.\]
It is well-known \cite{tao} that
\[E^{\odot}(Y,Z)\geq\frac{|Y|^2|Z|^2}{|Y\odot Z|},\]
where $\odot\in\{+,\times\}$.

In the following we will give some preliminary lemmas.
 Lemma \ref{coverlemma} may be found
in \cite{Shendifferentsets,Shen,Shen2}, while  Lemma
\ref{ImprovedPlunneckeRuzsa} in \cite{Gyarmati,KatzShen}. Lemma
\ref{Konyaginababcd}, following from the work of Glibichuk and
Konyagin \cite{Glibichuk,KonyaginGlibichuk} on additive properties
of product sets, was proved in
\cite{BourgainGaraev,Garaevoriginal,KatzShen,Shen}. Since the author
have not found a proof of  Lemma \ref{lemmallp} in some popular
references, we include a short proof here. Lemma \ref{lemmachang} is
due to Chang \cite{Changmeichu}, whereas we present a slightly
different variant.

\begin{lemma}\label{coverlemma}
  Suppose $B_1,B_2\subset \mathbb{F}_p$. Then there exist $\preceq\min\{\frac{|B_1+ B_2|}{|B_2|},\frac{|B_1-B_2|}{|B_2|}\}$ translates of $B_2$
  such
that these copies can cover (in cardinality) $99\%$ of $B_1$.
\end{lemma}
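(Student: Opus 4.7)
The plan is a standard iterative greedy covering argument. At the heart is the following double-counting identity: for any subset $S \subseteq B_1$,
\[
\sum_{t \in B_1 - B_2} |S \cap (t + B_2)| \;=\; \sum_{s \in S} |(s - B_2) \cap (B_1 - B_2)| \;=\; |S|\cdot|B_2|,
\]
where the last equality uses the trivial inclusion $s - B_2 \subseteq B_1 - B_2$ valid for every $s \in B_1$. By pigeonhole, some $t^{\ast} \in B_1 - B_2$ satisfies $|S \cap (t^{\ast} + B_2)| \geq |S|\,|B_2|/|B_1 - B_2|$.

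I would then iterate this pigeonhole step. Set $B_1^{(0)} := B_1$; at stage $i \geq 1$, apply the averaging with $S = B_1^{(i-1)}$ to pick $t_i \in B_1 - B_2$ maximizing the intersection, and put $B_1^{(i)} := B_1^{(i-1)} \setminus (t_i + B_2)$. A direct induction gives
\[
|B_1^{(k)}| \leq |B_1| \left(1 - \frac{|B_2|}{|B_1 - B_2|}\right)^{k} \leq |B_1|\,\exp\!\left(-\frac{k\,|B_2|}{|B_1 - B_2|}\right),
\]
so choosing $k = \lceil (\log 100)\,|B_1 - B_2|/|B_2| \rceil$ forces $|B_1^{(k)}| \leq |B_1|/100$. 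The translates $t_1 + B_2, \ldots, t_k + B_2$ then cover $99\%$ of $B_1$ with $k \preceq |B_1 - B_2|/|B_2|$.

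The companion bound $\preceq |B_1 + B_2|/|B_2|$ is obtained by running precisely the same argument with $B_2$ replaced by $-B_2$: the averaging identity becomes $\sum_{t \in B_1 + B_2} |S \cap (t - B_2)| = |S|\,|B_2|$, and the iteration produces the analogous covering by $\preceq |B_1 + B_2|/|B_2|$ sets of the form $t_i - B_2$, each of cardinality $|B_2|$ and viewed as a translate of $-B_2$ (equivalently, a reflected translate of $B_2$). Taking the minimum of the two constructions yields the lemma. The step I expect to require the most care is the initial averaging identity: restricting the parameter $t$ to the set $B_1 - B_2$ (rather than to all of $\mathbb{F}_p$) is precisely what produces $|B_1 - B_2|$ in the denominator, and a less careful averaging would yield only the much weaker bound $|S|\,|B_2|/p$ per translate.
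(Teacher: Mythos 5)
The paper does not actually prove this lemma (it cites \cite{Shendifferentsets,Shen,Shen2}), so I am judging your argument on its own. Your treatment of the difference-set bound is correct and is the standard greedy/pigeonhole proof: the identity $\sum_{t\in B_1-B_2}|S\cap(t+B_2)|=|S||B_2|$ is valid, and the iteration with the factor $(1-|B_2|/|B_1-B_2|)$ per step gives the $\preceq|B_1-B_2|/|B_2|$ bound for genuine translates $t+B_2$.

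The gap is in the sum-set half. Your averaging identity $\sum_{t\in B_1+B_2}|S\cap(t-B_2)|=|S||B_2|$ produces sets of the form $t-B_2$, and you wave this away as ``a reflected translate of $B_2$.'' A reflected translate is not a translate, so you have not proved the statement for the bound $|B_1+B_2|/|B_2|$; and the distinction is not cosmetic here, since in the paper's application (the derivation of the bound $(\frac{|A+B|}{2^j})^4|a_0B+a_0B+a_0B+a_0B|$ in Section 5) the sum-set case of the lemma is invoked to cover $bA_j$ by honest translates of $bB\cap a_0B$, which is what yields a fourfold sum of $a_0B$ rather than a mixed-sign expression. The fix is standard but requires a second-moment rather than a first-moment pigeonhole: writing $r(t)=|S\cap(t+B_2)|$, one has $\sum_t r(t)=|S||B_2|$ and $\sum_t r(t)^2=E^{+}(S,-B_2)=E^{+}(S,B_2)\ge\frac{|S|^2|B_2|^2}{|S+B_2|}$ by Cauchy--Schwarz, whence
\[
\max_t\,|S\cap(t+B_2)|\;\ge\;\frac{\sum_t r(t)^2}{\sum_t r(t)}\;\ge\;\frac{|S||B_2|}{|S+B_2|}\;\ge\;\frac{|S||B_2|}{|B_1+B_2|}.
\]
Feeding this into your iteration (which otherwise goes through verbatim) yields the covering of $99\%$ of $B_1$ by $\preceq|B_1+B_2|/|B_2|$ actual translates of $B_2$. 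With that one substitution your proof is complete; note also that your first-moment identity alone can only see $|B_1-B_2|$ in the denominator, which is exactly why the energy step is unavoidable for the sum-set bound.
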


\begin{lemma}\label{ImprovedPlunneckeRuzsa}
 Suppose $B_0,B_1,\ldots,B_k\subset\mathbb{F}_p$. Given any $\epsilon\in(0,1)$,
there exist a universal constant $C_{k,\epsilon}$  and
 an $X\subset B_0$ with $|X|\geq(1-\epsilon)|B_0|$ such that
\[
|X+B_1+B_2+\cdots+B_k|\leq
C_{k,\epsilon}\cdot\big(\prod_{i=1}^k\frac{|B_i+B_0|}{|B_0|}\big)\cdot|X|.
\]
\end{lemma}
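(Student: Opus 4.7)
My plan is to bootstrap from the classical (multi-summand) Plünnecke--Ruzsa inequality, which yields a \emph{nonempty} subset with the right bound but no lower bound on its size, by iterating the selection on the leftover part of $B_0$ until at least a $(1-\epsilon)$-fraction has been consumed.

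Concretely, recall Plünnecke--Ruzsa in the form: for any sets $C,B_1,\dots,B_k$ in an abelian group there is a nonempty $Y\subset C$ with
\[
|Y+B_1+\cdots+B_k|\le \prod_{i=1}^k\frac{|C+B_i|}{|C|}\cdot |Y|.
\]
Write $K:=\prod_{i=1}^k|B_i+B_0|/|B_0|$ for the target ratio. I will build $X$ as a disjoint union $Y_1\sqcup Y_2\sqcup\cdots\sqcup Y_N$ as follows. Set $X_0=\emptyset$; at step $m\ge 1$, if $|X_{m-1}|\ge(1-\epsilon)|B_0|$ stop and put $N=m-1$, otherwise apply Plünnecke--Ruzsa to $(B_0\setminus X_{m-1},B_1,\dots,B_k)$ to obtain a nonempty $Y_m\subset B_0\setminus X_{m-1}$ satisfying
\[
|Y_m+B_1+\cdots+B_k|\le\prod_{i=1}^k\frac{|(B_0\setminus X_{m-1})+B_i|}{|B_0\setminus X_{m-1}|}\cdot|Y_m|,
\]
and set $X_m=X_{m-1}\cup Y_m$. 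The stopping rule guarantees $|B_0\setminus X_{m-1}|>\epsilon|B_0|$, and obviously $|(B_0\setminus X_{m-1})+B_i|\le|B_0+B_i|$, so each factor is at most $\epsilon^{-1}|B_i+B_0|/|B_0|$ and the whole product is at most $\epsilon^{-k}K$. Since $|Y_m|\ge 1$ at every step, the procedure terminates in at most $|B_0|$ steps.

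Finally, let $X=X_N$. Then $|X|\ge(1-\epsilon)|B_0|$ by the stopping rule, and because $X+B_1+\cdots+B_k=\bigcup_{m=1}^{N}(Y_m+B_1+\cdots+B_k)$ and the $Y_m$ are disjoint,
\[
|X+B_1+\cdots+B_k|\le\sum_{m=1}^{N}|Y_m+B_1+\cdots+B_k|\le\epsilon^{-k}K\sum_{m=1}^{N}|Y_m|=\epsilon^{-k}K\cdot|X|,
\]
so $C_{k,\epsilon}:=\epsilon^{-k}$ suffices. The one nontrivial ingredient is the classical multi-summand Plünnecke--Ruzsa inequality itself (which I would invoke via Petridis's short proof, or equivalently Ruzsa's graph-theoretic argument); after that, the iterative bookkeeping above is routine and the expected main obstacle, namely avoiding blow-up of the ratio as the base set shrinks, is handled by the simple estimate $|B_0\setminus X_{m-1}|>\epsilon|B_0|$ coming from the stopping rule.
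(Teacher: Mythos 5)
Your argument is correct and is essentially the standard proof of this lemma from the cited sources (Katz--Shen and Gyarmati--Matolcsi--Ruzsa): the paper itself gives no proof but refers to exactly this iteration, in which the multi-summand Pl\"unnecke--Ruzsa inequality is applied repeatedly to the unconsumed part of $B_0$, with the stopping rule $|B_0\setminus X_{m-1}|>\epsilon|B_0|$ controlling the ratio blow-up. The bookkeeping (disjointness of the $Y_m$, subadditivity of the sumset cardinalities, and the constant $\epsilon^{-k}$) is all in order.
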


\begin{lemma}\label{Konyaginababcd}
Suppose $A_1\subset \mathbb{F}_p$ with
$\frac{A_1-A_1}{A_1-A_1}\neq\mathbb{F}_p$. Then (1) $|A_1|\preceq
p^{0.5}$; (2) there exist fixed elements $a_1,b_1,c_1,d_1\in A_1$
($a_1\neq b_1$)
 such that for any $A'\subset A_1$ with
$|A'|\succeq|A_1|$,
\[
|(b_1-a_1)A'+(b_1-a_1)A'+(d_1-c_1)A'|\succeq|A_1|^2;\] (3) there
exist
 fixed elements $a_2,b_2,c_2,d_2\in A_1$ ($a_2\neq b_2$)
 such that for any $A''\subset A_1$ with
$|A''|\succeq|A_1|$,
\[
|(b_2-a_2)A''-(b_2-a_2)A''+(d_2-c_2)A''|\succeq|A_1|^2.
\]
\end{lemma}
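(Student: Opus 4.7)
My plan separates into three parts. Part (3) being the symmetric twin of (2), I focus on proving (1) and (2) in detail.

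For (1), set $R := (A_1 - A_1)/(A_1 - A_1)$. Since $0 \in R$, the hypothesis $R \neq \mathbb{F}_p$ produces some $t \in \mathbb{F}_p \setminus R$ with $t \neq 0$. If $x + ty = x' + ty'$ with $(x, y), (x', y') \in A_1 \times A_1$ and $y \neq y'$, then $t = (x' - x)/(y - y') \in R$, a contradiction; and $y = y'$ forces $x = x'$. Hence the map $(x, y) \mapsto x + ty$ on $A_1 \times A_1$ is injective, yielding $|A_1|^2 \leq p$.

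For (2), my plan is to reduce the existence statement to a representation-energy bound. For $r, s \in \mathbb{F}_p$ define
\[
E(Y; r, s) := \#\{(x_1, x_2, x_3, x_1', x_2', x_3') \in Y^6 : r(x_1 - x_1') + r(x_2 - x_2') + s(x_3 - x_3') = 0\}.
\]
By Cauchy--Schwarz, $|rY + rY + sY| \geq |Y|^6 / E(Y; r, s)$. The quantity $E(\cdot; r, s)$ is monotone under inclusion, so any bound $E(A_1; r, s) \preceq |A_1|^4$ for a \emph{fixed} pair $(r, s) = (b_1 - a_1, d_1 - c_1)$ automatically transfers to every $A' \subseteq A_1$ with $|A'| \succeq |A_1|$: we obtain $|rA' + rA' + sA'| \geq |A'|^6 / E(A_1; r, s) \succeq |A_1|^2$, exactly the claim of (2).

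It thus suffices to locate a quadruple $(a_1, b_1, c_1, d_1) \in A_1^4$ with $a_1 \neq b_1$ such that $E(A_1; b_1 - a_1, d_1 - c_1) \preceq |A_1|^4$. My plan is to average $E(A_1; r, s)$ over pairs $(r, s) \in ((A_1 - A_1) \setminus \{0\}) \times ((A_1 - A_1) \setminus \{0\})$ and extract the quadruple by pigeonhole. This is the Glibichuk--Konyagin step: each non-trivial collision contributing to $E$ forces the ratio $s/r$ to equal $-((x_1 - x_1') + (x_2 - x_2'))/(x_3 - x_3')$, which couples $s/r$ to an element of $R$ (or a mild enlargement thereof); because $R$ misses $t$, an incidence-style count (e.g.\ via Lemma~\ref{coverlemma}) bounds the relevant configurations and controls the average.

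The main obstacle is precisely this averaging step: quantitatively converting the single missing scalar $t \notin R$ into a uniform bound on the average representation energy of the triple dilate sum. Once it is accomplished, part (3) follows by the identical argument with the second occurrence of $r$ in the definition of $E$ replaced by $-r$, producing the quadruple $(a_2, b_2, c_2, d_2)$ with the analogous difference property.
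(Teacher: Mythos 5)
Part (1) is correct and complete: the injectivity of $(x,y)\mapsto x+ty$ on $A_1\times A_1$ for $t\notin\frac{A_1-A_1}{A_1-A_1}$ is exactly the standard argument and gives $|A_1|^2\leq p$. But for part (2) you have only carried out the easy half. The reduction to finding a fixed pair $(r,s)=(b_1-a_1,d_1-c_1)$ with $E(A_1;r,s)\preceq|A_1|^4$ is logically valid (Cauchy--Schwarz plus monotonicity of the energy under passing to subsets), but the existence of such a pair is the entire content of the lemma, and you explicitly defer it (``the main obstacle is precisely this averaging step''). Worse, the averaging route you sketch is problematic: the pairs $(r,s)$ you are allowed to use are constrained to have ratio $s/r$ in the set $R=\frac{A_1-A_1}{A_1-A_1}$, which under the hypothesis is a \emph{proper}, possibly highly structured, subset of $\mathbb{F}_p$; the Fourier/averaging argument that bounds the mean of $E(Y;1,\xi)$ (Glibichuk's Formula (5), used in the paper's Lemma \ref{lemmallp}) requires averaging $\xi$ over essentially all of $\mathbb{F}_p^{\ast}$ and is exactly what is \emph{unavailable} in the case $R\neq\mathbb{F}_p$. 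Moreover, the energy bound $E(A_1;r,s)\preceq|A_1|^4$ is strictly stronger than the cardinality bound actually needed, and it is not clear it holds for any admissible pair.

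The known proof (Glibichuk--Konyagin, as used in \cite{Garaevoriginal,KatzShen}) avoids energies entirely via a successor trick: since $0\in R$ and $R\neq\mathbb{F}_p$, there must exist $\xi\in R$ with $\xi+1\notin R$ (otherwise induction from $0$ would give $R=\mathbb{F}_p$). Write $\xi=\frac{d_1-c_1}{b_1-a_1}$. Then for any $A'\subset A_1$ the map $(x,y)\mapsto x+(\xi+1)y$ is injective on $A'\times A'$ by the same computation as in your part (1), so
\[
|A'+A'+\xi A'|\geq|A'+(\xi+1)A'|=|A'|^2\succeq|A_1|^2,
\]
using $x+(\xi+1)y=x+y+\xi y$. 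Multiplying through by $b_1-a_1$ gives (2); choosing instead $\xi\in R$ with $\xi-1\notin R$ and using $x-y+\xi y=x+(\xi-1)y$ gives (3). Your part (3) ``symmetric twin'' remark would also need this sign adjustment, not merely replacing $r$ by $-r$ in the energy. As it stands, the core of parts (2) and (3) is missing.
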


\begin{lemma}\label{lemmallp} Suppose $A_1\subset \mathbb{F}_p$ with
  $\frac{A_1-A_1}{A_1-A_1}=\mathbb{F}_p$.
Then there exist fixed elements $a_3,b_3,c_3,d_3\in A_1$ ($a_3\neq
b_3$) such that for any $A'''\subset A_1$ with $|A'''|\succeq|A_1|$,
\[|(b_3-a_3)A'''+(d_3-c_3)A'''|\succeq\min\{|A_1|^2,p\}.\]
\end{lemma}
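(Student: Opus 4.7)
The plan is to find a scalar $\lambda_0\in\mathbb{F}_p^*$, depending only on $A_1$, such that $|A'''+\lambda_0 A'''|\succeq\min\{|A_1|^2,p\}$ for every $A'''\subset A_1$ with $|A'''|\succeq|A_1|$. Once $\lambda_0$ is pinned down, the hypothesis $(A_1-A_1)/(A_1-A_1)=\mathbb{F}_p$ provides fixed $a_3,b_3,c_3,d_3\in A_1$ with $a_3\neq b_3$ and $\lambda_0=(d_3-c_3)/(b_3-a_3)$, whence
\[
(b_3-a_3)A'''+(d_3-c_3)A'''=(b_3-a_3)(A'''+\lambda_0 A''')
\]
has the same cardinality as $A'''+\lambda_0 A'''$.

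To choose $\lambda_0$, I would combine two facts already laid out in the preliminaries. First, $|A'''+\lambda_0 A'''|\geq|A'''|^4/E^{+}(A''',\lambda_0 A''')$. Second, since $A'''\subset A_1$, the quadruples counted by $E^{+}(A''',\lambda_0 A''')$ form a subset of those counted by $E^{+}(A_1,\lambda_0 A_1)$, and therefore $E^{+}(A''',\lambda_0 A''')\leq E^{+}(A_1,\lambda_0 A_1)$. The problem thus reduces to selecting $\lambda_0$ so that $E^{+}(A_1,\lambda_0 A_1)$ is small, an inequality formulated entirely in terms of $A_1$ and hence uniform in $A'''$.

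Such a $\lambda_0$ is produced by averaging. Viewing $E^{+}(A_1,\lambda A_1)$ as the number of quadruples $(x_1,x_2,y_1,y_2)\in A_1^4$ satisfying $x_1-x_2=\lambda(y_2-y_1)$, and summing over $\lambda\in\mathbb{F}_p^*$, I would split according to whether $y_1=y_2$ (then $x_1=x_2$ is forced but any $\lambda$ works) or $y_1\neq y_2$ (then at most one $\lambda$ works), to obtain the exact identity
\[
\sum_{\lambda\in\mathbb{F}_p^*}E^{+}(A_1,\lambda A_1)=(p-1)|A_1|^2+(|A_1|^2-|A_1|)^2\leq p|A_1|^2+|A_1|^4.
\]
Pigeonhole then yields some $\lambda_0\in\mathbb{F}_p^*$ with $E^{+}(A_1,\lambda_0 A_1)\leq 4\max\{|A_1|^2,|A_1|^4/p\}$.

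Combining the pieces, for any $A'''\subset A_1$ with $|A'''|\succeq|A_1|$,
\[
|(b_3-a_3)A'''+(d_3-c_3)A'''|=|A'''+\lambda_0 A'''|\geq\frac{|A'''|^4}{E^{+}(A_1,\lambda_0 A_1)}\succeq\min\{|A_1|^2,p\},
\]
which is the required bound. The only genuinely delicate point is that one fixed $\lambda_0$ must serve uniformly for every large $A'''\subset A_1$; this is precisely what the trivial monotonicity $E^{+}(A''',\lambda_0 A''')\leq E^{+}(A_1,\lambda_0 A_1)$ buys us, since it transfers the single global estimate obtained for $A_1$ to every large subset without any loss.
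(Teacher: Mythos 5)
Your proof is correct and takes essentially the same route as the paper: fix a dilation $\lambda_0\in\mathbb{F}_p^{*}$ with $E^{+}(A_1,\lambda_0 A_1)$ small, write $\lambda_0=(d_3-c_3)/(b_3-a_3)$ using the hypothesis $\frac{A_1-A_1}{A_1-A_1}=\mathbb{F}_p$, and pass to arbitrary large subsets via the monotonicity $E^{+}(A''',\lambda_0 A''')\leq E^{+}(A_1,\lambda_0 A_1)$ together with $|A'''+\lambda_0 A'''|\geq |A'''|^4/E^{+}(A''',\lambda_0 A''')$. The only difference is cosmetic: you derive the existence of the good $\lambda_0$ by the averaging/pigeonhole count yourself, whereas the paper simply cites this as Formula (5) of Glibichuk.
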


\begin{proof}
 There exists   $\xi\in\mathbb{F}_p^{\ast}=\mathbb{F}_p\backslash\{0\}$  (cf. Formula (5) in \cite{Glibichuk}) such that
\[E^{+}(A_1,\xi A_1)\leq|A_1|^2+\frac{|A_1|^4}{p-1}.\]
 Since $\frac{A_1-A_1}{A_1-A_1}=\mathbb{F}_p$, we can write
$\xi=\frac{d_3-c_3}{b_3-a_3}$ for some $a_3,b_3,c_3,d_3\in A_1$.
Thus
\[|A'''+\xi A'''|\geq\frac{|A'''|^4}{E^{+}(A''',\xi A''')}\geq\frac{|A'''|^4}{E^{+}
(A_1,\xi A_1)}\succeq\frac{|A_1|^4}{E^{+}(A_1,\xi
A_1)}\succeq\min\{|A_1|^2,p\}.\] This proves the lemma.
\end{proof}

\begin{lemma}\label{lemmachang}
Suppose $Y,Z\subset \mathbb{F}_p$. Choose a fixed element $y_0\in Y$
so that \[\sum_{y\in Y}|y_0Z\cap
yZ|\geq\frac{E^{\times}(Y,Z)}{|Y|}.\]  For each
$j\leq\lceil\log_2|Z|\rceil$, let $Y_j$ be the set of all $y\in Y$
for which $|y_0Z\cap yZ|\in N_j$, where $N_1=\{1,2\}, N_2=\{3,4\},
N_3=\{5,6,7,8\}, N_4=\{9,10,11,12,13,14,15,16\}$, $\ldots$. Then
\[\max_{j}16^j|Y_j|^3\succeq\frac{E^{\times}(Y,Z)^4}{|Y|^4|Z|}.\]

\end{lemma}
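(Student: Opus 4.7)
The plan is a standard dyadic decomposition of the hypothesis followed by Hölder's inequality.

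First, I would convert the hypothesis on $\sum_y |y_0Z\cap yZ|$ into a sum over the dyadic levels $Y_j$. By the very construction of $Y_j$, every $y\in Y_j$ contributes at most $2^j$ to the sum; and since $|y_0Z\cap yZ|\leq |Z|$, every $y\in Y$ with nonzero intersection sits in some $Y_j$ with $j\leq \lceil\log_2|Z|\rceil$. Thus
\[
\frac{E^{\times}(Y,Z)}{|Y|} \;\leq\; \sum_{y\in Y}|y_0Z\cap yZ| \;=\; \sum_{j=1}^{\lceil\log_2|Z|\rceil}\sum_{y\in Y_j}|y_0Z\cap yZ| \;\leq\; \sum_{j} 2^j|Y_j|.
\]

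Next I would apply Hölder's inequality to the dyadic sum. Writing each term as
\[
2^j|Y_j| \;=\; \bigl(16^j|Y_j|^3\bigr)^{1/4}\cdot|Y_j|^{1/4}
\]
and pulling the maximum out, one gets
\[
\sum_j 2^j|Y_j| \;\leq\; \bigl(\max_j 16^j|Y_j|^3\bigr)^{1/4}\cdot\sum_j|Y_j|^{1/4}.
\]
Combined with the first step, this reduces the lemma to the residual estimate $\sum_j|Y_j|^{1/4}\preceq |Z|^{1/4}$, after which raising to the fourth power gives the claim.

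To control the residual, I would use two ingredients: (i) the cap $j\leq\lceil\log_2|Z|\rceil$, and (ii) the pointwise bound $|Y_j|\cdot 2^{j-1}\leq \sum_y|y_0Z\cap yZ|\leq |Z|^2$, which comes from the double-counting observation that each pair $(z_1,z_2)\in Z\times Z$ with $z_2\neq 0$ pins down at most one $y=y_0z_1/z_2$; this gives $|Y_j|\leq 2|Z|^2/2^j$. Interpolating between this decaying bound and the trivial $|Y_j|\leq|Y|$, then summing a geometric series, is the standard way of producing the required $|Z|^{1/4}$.

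The main obstacle is precisely this residual-sum estimate. Relying only on $|Y_j|\leq |Y|$ and the number of levels gives by power-mean only $\sum_j|Y_j|^{1/4}\preceq(\log|Z|)^{3/4}|Y|^{1/4}$; relying only on $|Y_j|\leq 2|Z|^2/2^j$ gives only $\sum_j|Y_j|^{1/4}\preceq|Z|^{1/2}$. Neither bound by itself is sharp enough to yield the clean factor $|Z|^{1/4}$ without log loss, and the delicate balancing of the two regimes is the crux of the Chang-style trick referenced in the excerpt.
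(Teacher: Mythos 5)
Your first two steps are fine, but the reduction they produce --- to the claim $\sum_j|Y_j|^{1/4}\preceq|Z|^{1/4}$ --- is a reduction to a false statement, so the gap you flag at the end cannot be closed along this route. Take $Y=Z$ and suppose the mass is spread evenly, $|Y_j|\sim|Y|/L$ for every $j\le L:=\lceil\log_2|Y|\rceil$; this is compatible with both of your pointwise bounds ($|Y_j|\le|Y|$ and $|Y_j|\preceq|Z|^2/2^j$) and with the cap on $j$, yet $\sum_j|Y_j|^{1/4}\sim L^{3/4}|Z|^{1/4}$, which no absolute constant controls. The loss is intrinsic to the step where you pull $\max_j(16^j|Y_j|^3)^{1/4}$ out of the sum: when all the $|Y_j|$ are comparable, $\sum_j2^j|Y_j|$ is a geometric series concentrated at $j=L$, while $\sum_j|Y_j|^{1/4}$ has $L$ equal terms, so H\"older is off by a factor $L$. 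This is exactly the logarithmic loss in the Bourgain--Garaev and Shen arguments; removing it is the entire point of the lemma, so keeping the H\"older skeleton and hoping to ``balance the two regimes'' in the residual sum cannot succeed.

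The paper's proof (Chang's trick) replaces H\"older by a second dyadic decomposition, this time in the size of $|Y_j|$. For each $s$ let $J_s=\{j:|Y_j|\in N_s\}$ and $j_s=\max J_s$. Within one class $J_s$ all the $|Y_j|$ are $\sim2^s$ and the $j$'s are distinct integers, so $\sum_{j\in J_s}2^j|Y_j|\preceq2^s\cdot2^{j_s}$: the geometric series in $j$ is dominated by its top term, with no H\"older and no loss. Then
\[
\sum_j2^j|Y_j|\preceq\sum_s2^{j_s}2^s=\sum_s\bigl(2^{j_s}2^{3s/4}\bigr)2^{s/4}\preceq\Bigl(\max_j2^j|Y_j|^{3/4}\Bigr)\sum_s2^{s/4},
\]
using $2^{3s/4}\preceq|Y_{j_s}|^{3/4}$ for $|Y_{j_s}|\in N_s$, and the last sum is again geometric, $\preceq|Z|^{1/4}$ since $s\preceq\log_2|Z|$ (this is where one needs $|Y_j|\preceq|Z|$, which holds in all of the paper's applications). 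So the factor $|Z|^{1/4}$ you are after does not come from $\sum_j|Y_j|^{1/4}$ at all, but from $\sum_s2^{s/4}$ with $2^s$ tracking the dyadic size of $|Y_j|$; the missing idea in your proposal is this regrouping of the levels $j$ according to the size of $|Y_j|$.
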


\begin{proof}
Define $j_s=\max\{j:|Y_j|\in N_s\}$ for each $s\leq
\lceil\log_2|Z|\rceil$ (assume $\max\emptyset=0$). Clearly,
\[\sum_{s:j_s\geq1}2^{j_s}2^s\sim
\sum_{j=1}^{\lceil\log_2|Z|\rceil}2^j |Y_j| \sim\sum_{y\in
Y}|y_0Z\cap yZ|.
\]
Note also \[\label{30}\sum_{s:j_s\geq1}2^{j_s}2^s\leq \big(\max_{s:
j_s\geq1}2^{j_s}2^{0.75s}\big)
\sum_{s=1}^{\lceil\log_2|Z|\rceil}2^{0.25s}
\preceq\big(\max_j2^j|Y_j|^{0.75}\big)\cdot|Z|^{0.25}.\] This proves
the lemma.
\end{proof}

\section{Sum-product estimates  on small sets}

In this section we prove Theorem \ref{Theorem1312}. Suppose
$A\subset \mathbb{F}_p$ with $|A|\preceq p^{0.5}$. Applying Lemma
\ref{ImprovedPlunneckeRuzsa} twice with
$\epsilon=\frac{\sqrt{2}}{2}$, one can find a subset $Z\subset A$
with $|Z|\geq\frac{|A|}{2}$ such that
\begin{equation}\label{sss}|Z\pm Z\pm Z\pm Z|\leq|Z\pm A\pm A\pm A|\preceq
(\frac{|A\pm A|}{|A|})^3|Z|\preceq\frac{|A\pm
A|^3}{|A|^2}.\end{equation} Choose a fixed element $z_0\in Z$ so
that \[\sum_{z\in Z}|z_0Z\cap zZ|\geq\frac{E^{\times}(Z,Z)}{|Z|}.\]
For each $j\leq\lceil\log_2|Z|\rceil$, let $Z_j$ be the set of all
$z\in Z$ for which $|z_0Z\cap zZ|\in N_j$  (see Lemma
\ref{lemmachang} for the meaning of $N_j$). Then we can deduce from
\cite{Shen,Shen2} or mimic the proof of Proposition
\ref{3333proposition} (see also Formula (\ref{important}) in Section
5) to know that
\begin{equation}\label{2222proposition}
\max_j16^{j}|Z_j|^3\preceq|Z\pm Z|^5\cdot|Z\pm Z\pm Z\pm Z|.
\end{equation}
By Lemma  \ref{lemmachang},
\begin{equation}\label{llpllp}
\max_j16^{j}|Z_j|^3\succeq\frac{E^{\times}(Z,Z)^4}{|Z|^5}\geq\frac{|Z|^{11}}{|ZZ|^4}\succeq\frac{|A|^{11}}{|AA|^4}.
\end{equation} Combining (\ref{sss}), (\ref{2222proposition}) and
(\ref{llpllp}) yields
\[|A\pm A|^8|AA|^4\succeq|A|^{13}.\]
This proves Theorem \ref{Theorem1312}.

\begin{remark} To establish (\ref{bourgain}),
Bourgain and Garaev \cite{BourgainGaraev} actually proved that for
any $A\subset \mathbb{F}_p$ one has
\[E^{\times}(A,A)^4\preceq\big(|A-A|+\frac{|A|^3}{p}\big)\cdot|A|^5\cdot|A-A|^4\cdot|A+A-A-A|\cdot(\log_2|A|)^4.\]
Particularly, if $|A|\preceq p^{0.5}$, then
\begin{equation}\label{iioo}E^{\times}(A,A)^4\preceq|A|^5\cdot|A-A|^5\cdot|A+A-A-A|\cdot (\log_2|A|)^4.\end{equation}
Based on the arguments in \cite{BourgainGaraev} and this section one
can drop the logarithmetic term in (\ref{iioo}):
\begin{equation}\label{jjjjjjjj}
E^{\times}(A,A)^4\preceq|A|^5\cdot|A-A|^5\cdot|A+A-A-A|.
\end{equation}
Besides, two byproducts of the proof of Theorem \ref{Theorem1312}
are the estimates (suppose $|A|\preceq p^{0.5}$):
\begin{align}
E^{\times}(A,A)^4&\preceq|A|^5\cdot|A\pm A|^5\cdot|A\pm A\pm A\pm
A|,\label{jjjjj}\\
E^{\times}(A,A)^4&\preceq|A|^3\cdot|A\pm A|^8.
\end{align}


\end{remark}

\section{Sum-product estimates on large sets}

In this section we give a proof of Theorem \ref{Theoremlarge}.
Suppose $A\subset \mathbb{F}_p$ with $|A|\succeq p^{0.5}$.
  Similar to the analysis in Section 3, there
exist   a  subset $Z\subset A$ with $|Z|\geq\frac{|A|}{2}$ such that
\[|Z\pm Z\pm Z\pm Z|\preceq\frac{|A\pm A|^3}{|A|^2},\] and a fixed
element $z_0\in Z$ so that
\[\sum_{z\in Z}|z_0Z\cap
zZ|\geq\frac{|Z|^3}{|ZZ|}\succeq\frac{|A|^3}{|AA|}.\]
 For each
$j\leq\lceil\log_2|Z|\rceil$, let $Z_j$ be the set of all $z\in Z$
for which $|z_0Z\cap zZ|\in N_j$. Choose some
$j_0\leq\lceil\log_2|Z|\rceil$ so that
\[
2^{j_0}|Z_{j_0}|\succapprox\frac{|A|^3}{|AA|}.
\]
There are two cases to consider.

($\spadesuit$) Suppose $\frac{Z_{j_0}-Z_{j_0}}{Z_{j_0}-Z_{j_0}}\neq
\mathbb{F}_p$. By Lemma \ref{Konyaginababcd}, $|Z_{j_0}|\preceq
p^{0.5}$. Similar to (\ref{2222proposition}) one can establish
\[16^{j_0}|Z_{j_0}|^3\preceq|Z\pm Z|^5\cdot|Z\pm Z\pm Z\pm Z|.\]
Consequently,
\[\frac{|A|^{12}}{|AA|^4}\precapprox16^{j_0}|Z_{j_0}|^4\preceq|Z\pm Z|^5\cdot|Z\pm Z\pm Z\pm Z|\cdot|Z_{j_0}|\preceq\frac{|A\pm A|^8}{|A|^2}\cdot p^{0.5},\]
which yields
\begin{equation}\label{recentsss}
|A\pm A|^8|AA|^4\succapprox\frac{|A|^{14}}{p^{0.5}}.
\end{equation}

($\clubsuit$) Suppose $\frac{Z_{j_0}-Z_{j_0}}{Z_{j_0}-Z_{j_0}}=
\mathbb{F}_p$. If $|Z_{j_0}|\preceq p^{0.5}$, then follow the
analysis in ($\spadesuit$) to obtain (\ref{recentsss}). Next suppose
$|Z_{j_0}|\succeq p^{0.5}$. Similar to the proof of
(\ref{2222proposition}) in \cite{Shen,Shen2} one can establish
\[p\preceq(\frac{|Z\pm Z|}{2^{j_0}})^4\cdot|Z\pm Z\pm Z\pm Z|.\]
Consequently,
\[\frac{|A|^{8}}{|AA|^4}\leq\frac{|A|^{12}}{|AA|^4|Z_{j_0}|^4}\precapprox16^{j_0}\preceq\frac{|Z\pm Z|^4|Z\pm Z\pm Z\pm Z|}{p}
\preceq\frac{|A\pm A|^7}{p|A|^2},\] which yields
\begin{equation}\label{cvcvcvcv}
|A\pm A|^7|AA|^4\succapprox|A|^{10}p.
\end{equation} Thus  Theorem
\ref{Theoremlarge} follows from (\ref{recentsss}) and
(\ref{cvcvcvcv}).

\section{Sum-product estimates on different sets}

In this section we  prove Theorem \ref{main11}, Theorem \ref{main33}
and Theorem \ref{main22} together.  Suppose $A,B\subset
\mathbb{F}_p$. Choose a fixed element $a_0\in A$ so that
\[\sum_{a\in A}|aB\cap a_0B|\geq\frac{|A||B|^2}{|AB|}.\]
For each $j\leq\lceil\log_2|B|\rceil$, let $A_j$ be the set of all
$a\in A$ for which $|aB\cap a_0B|\in N_j$. With such preparation and
notations we establish the following proposition (the idea of this
proposition is due to Chun-Yen Shen
\cite{Shendifferentsets,Shen,Shen2}).

\begin{Proposition}\label{3333proposition} (a) If $\frac{A_j-A_j}{A_j-A_j}\neq
\mathbb{F}_p$, then
\begin{equation}\label{nomatter}16^{j}|A_j|^3\preceq\frac{|A+B|^{10}}{|A|^3|B|}.\end{equation}
(b) If $\frac{A_j-A_j}{A_j-A_j}= \mathbb{F}_p$, then
\begin{equation}16^j\cdot\min\{|A_j|^2,p\}\preceq\frac{|A+B|^8}{|A|^3}.\end{equation}
(c) No matter what happens, one always has (\ref{nomatter}) if
$|A_j|\preceq p^{0.5}$.
\end{Proposition}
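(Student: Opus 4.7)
The plan is to adapt Shen's pigeonhole/covering framework to the two-set setting. First I would perform a dyadic pigeonhole on $B$: for each $a \in A_j$, set $B_a = \{b \in B : ab \in a_0 B\}$, so $|B_a| \sim 2^j$; since $\sum_{a \in A_j}|B_a| \sim 2^j|A_j|$, averaging over $B$ produces $b_* \in B$ with $A_j^* := \{a \in A_j : b_* \in B_a\}$ of size $|A_j^*| \succeq 2^j|A_j|/|B|$. For each $a \in A_j^*$ the identity $ab_* = a_0 a^\flat$ with $a^\flat \in B$ propagates to give the structural inclusion $b_*(kA_j^* - lA_j^*) \subset a_0(kB - lB)$ for all $k, l \geq 0$, and hence $|kA_j^* - lA_j^*| \leq |kB - lB| \preceq (|A+B|/|A|)^{k+l}|A|$ via Lemma \ref{ImprovedPlunneckeRuzsa}.

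In case (a), the hypothesis $(A_j - A_j)/(A_j - A_j) \neq \mathbb{F}_p$ is inherited by $A_j^*$, so Lemma \ref{Konyaginababcd}(2) applied to $A_j^*$ produces fixed $\alpha, \beta, \gamma, \delta \in A_j^*$ with $\alpha \neq \beta$ and
\[|(\beta - \alpha) A_j^* + (\beta - \alpha) A_j^* + (\delta - \gamma) A_j^*| \succeq |A_j^*|^2.\]
I would then upper-bound this sumset by combining Lemma \ref{coverlemma} (covering $99\%$ of $A_j^*$ by $\preceq|A+B|/|B|$ translates of $B$) with the Pl\"unnecke--Ruzsa bounds above: each dilated copy of $A_j^*$ is replaced by $\preceq|A+B|/|B|$ translates of a dilate of $B$, after which the resulting sumset of dilates of $B$ is controlled by repeated application of Lemma \ref{ImprovedPlunneckeRuzsa}. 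Tracking exponents yields an upper bound of the form $|A+B|^{10}/(4^j|A_j||A|^3|B|^3)$; combined with $|A_j^*|^2 \geq 4^j|A_j|^2/|B|^2$, rearrangement gives (\ref{nomatter}).

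Case (b) uses Lemma \ref{lemmallp} applied to $A_j$ in place of Lemma \ref{Konyaginababcd}, yielding the two-term lower bound $|(\beta_3-\alpha_3)A_j + (\delta_3-\gamma_3)A_j| \succeq \min\{|A_j|^2, p\}$; its upper-bound analysis is one covering step shorter, hence $|A+B|^8/|A|^3$. For case (c), when $|A_j| \preceq p^{0.5}$ one has $\min\{|A_j|^2, p\} = |A_j|^2$; multiplying case-(b)'s bound by $|A_j|$ and using $|A+B|^2 \geq |A||B| \geq |A_j||B|$ (AM--GM via $|A+B| \geq \sqrt{|A||B|}$) recovers (\ref{nomatter}) exactly.

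The main obstacle is the exponent bookkeeping in case (a): after transporting the Konyagin--Glibichuk sumset via $b_*$, one encounters dilations of $A_j^*$ by elements of $A_j^* - A_j^* \subset A - A$ which naively look like product sets. Lemma \ref{coverlemma} is essential to replace each such dilated set by dilates of $B$ at cost $|A+B|/|B|$ per replacement, and Lemma \ref{ImprovedPlunneckeRuzsa} then handles the resulting $B$-sumsets at cost $|A+B|/|A|$ per Pl\"unnecke step. Matching the combined exponents to $|A+B|^{10}$ with denominator $|A|^3|B|$ exactly parallels the counts in Shen's analyses in \cite{Shendifferentsets,Shen,Shen2}.
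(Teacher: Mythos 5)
Your setup (the pigeonhole producing $b_*$ and $A_j^*$ with $|A_j^*|\succeq 2^j|A_j|/|B|$ and $b_*A_j^*\subset a_0B$) is correct as far as it goes, and your reductions in cases (b) and (c) are fine in outline, but the central step of case (a) --- the upper bound on the Konyagin--Glibichuk sumset --- is not actually carried out, and the route you describe for it would fail. After Lemma \ref{Konyaginababcd} you must bound a sum of \emph{dilates} such as $\beta A_j^*-\alpha A_j^*+\cdots$ with $\alpha,\beta,\gamma,\delta\in A_j^*$. You propose to cover each dilate by $\preceq|A+B|/|B|$ translates of the corresponding dilate of $B$ and then ``handle the resulting $B$-sumsets'' by Lemma \ref{ImprovedPlunneckeRuzsa}. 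But what results is a sumset of \emph{distinct} dilates $\alpha B+\beta B+\gamma B+\delta B$, and Pl\"unnecke--Ruzsa gives no control of $|\alpha B+\beta B|$ in terms of $|A+B|$ and $|B|$: there is no common summand $B_0$ for which $|\alpha B+B_0|$ is bounded. The same obstruction appears if you instead push everything through the inclusion $b_*A_j^*\subset a_0B$, since the dilation constants $\beta-\alpha$, $\delta-\gamma$ do not match $b_*$. Relatedly, the factor $4^{-j}$ you claim in the upper bound $|A+B|^{10}/(4^j|A_j||A|^3|B|^3)$ has no source in your argument: every covering you invoke costs $|A+B|/|B|$, which is $j$-independent, so the exponent bookkeeping cannot close.

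The paper's proof resolves both problems with one device that your sketch omits: since $a\in A_j$ means $|aB\cap a_0B|\succeq 2^j$, one covers $aA_j$ (for each of the four fixed elements $a,b,c,d\in A_j$) by $\preceq|aA_j+aB|/|aB\cap a_0B|\preceq|A+B|/2^j$ translates of $aB\cap a_0B$. Because every covering piece is a translate of a subset of the \emph{single fixed} dilate $a_0B$, the resulting sumset is controlled by $|a_0B+a_0B+a_0B+a_0B|=|B+B+B+B|\leq|A+B|^4/|A|^3$, to which Pl\"unnecke--Ruzsa does apply; and the four covering costs supply exactly the factor $(|A+B|/2^j)^4=16^{-j}|A+B|^4$. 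Together with $|A+A|\leq|A+B|^2/|B|$ (needed to reduce the six-fold sum $(b-a)E+(b-a)A'+(d-c)A'$ to the four-fold $-aA'+bA'-cA'+dA'$ via Lemma \ref{ImprovedPlunneckeRuzsa}), this yields \eqref{important} and hence \eqref{nomatter}. No auxiliary pigeonhole onto $b_*$ is needed; the lower bound is applied directly to $A_j$ (more precisely to a large subset $E\subset A'\subset A_j$), giving $|A_j|^2$ rather than $|A_j^*|^2$. To repair your write-up you would need to replace the $|A+B|/|B|$-coverings by these $|A+B|/2^j$-coverings into $a_0B$, at which point the detour through $b_*$ and $A_j^*$ becomes unnecessary.
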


\begin{proof}

 We only prove this
proposition for the case $\frac{A_j-A_j}{A_j-A_j}\neq \mathbb{F}_p$,
and the interested reader can similarly deal the case
$\frac{A_j-A_j}{A_j-A_j}= \mathbb{F}_p$ and (c) without difficulty.
By Lemma \ref{Konyaginababcd} (if $\frac{A_j-A_j}{A_j-A_j}=
\mathbb{F}_p$, then apply Lemma \ref{lemmallp}), one can find
$a,b,c,d\in A_j$ ($a\neq b$) such that for any ${E}\subset A_j$ with
$|{E}|\geq0.5{|A_j|}$,
\begin{equation}\label{6666}|(b-a){E}+(b-a){E}+(d-c){E}|\succeq |A_j|^2.\end{equation} By Lemma
\ref{coverlemma}, there exist
$$\preceq\frac{|-aA_j-aB\cap a_0B|}{|aB\cap a_0B|}\preceq\frac{|-aA_j-aB|}{2^{j}}\preceq\frac{|A+B|}{2^{j}}$$ translates of $aB\cap
a_0B$ such that these copies can cover $99\%$ of $-aA_j$, there
exist
$$\preceq\frac{|bA_j+bB\cap a_0B|}{|bB\cap a_0B|}\preceq\frac{|bA_j+bB|}{2^{j}}\preceq\frac{|A+B|}{2^{j}}$$ translates of $bB\cap
a_0B$ such that these copies can cover $99\%$ of $bA_j$ . Similar
facts hold for $-cA_j$ and $dA_j$ with corresponding translates of
 $cB\cap
a_0B$ and $dB\cap a_0B$. Hence there exist a subset $A'\subset A_j$
covering $80\%$ of $A_j$, and $\preceq\frac{|A+B|}{2^{j}}$
translates of $aB\cap a_0B$ such that these copies can totally cover
$-aA'$, $\preceq\frac{|A+B|}{2^{j}}$ translates of $bB\cap a_0B$
such that these copies can totally cover $bA'$,
$\preceq\frac{|A+B|}{2^{j}}$ translates of $cB\cap a_0B$ such that
these copies can totally cover $-cA'$, $\preceq\frac{|A+B|}{2^{j}}$
translates of $dB\cap a_0B$ such that these copies can totally cover
$dA'$. Thus
\begin{equation}\label{7777}
|-aA'+bA'-cA'+dA'|\preceq(\frac{|A+B|}{2^{j}})^4|a_0B+a_0B+a_0B+a_0B|.
\end{equation}
By Lemma \ref{ImprovedPlunneckeRuzsa}, there exists an $E\subset A'$
with $|E|\geq0.8|A'|\geq0.64|A_j|\geq0.5|A_j|$ such that
\begin{equation}\label{8888}
|(b-a)E+(b-a)A'+(d-c)A'|\preceq\frac{|A'+A'|}{|A'|}\cdot|(b-a)A'+(d-c)A'|.
\end{equation}
Combining (\ref{6666}), (\ref{7777}) and (\ref{8888}) yields
\begin{equation}
\label{important}
|A_j|^3\preceq|A+A|\cdot(\frac{|A+B|}{2^{j}})^4\cdot|B+B+B+B|.
\end{equation}
Thus we can conclude the proof  by simply applying the
Pl\"{u}nnecke-Ruzsa inequality:
\[
|A+A|\leq\frac{|A+B|^2}{|B|},\ \ \
|B+B+B+B|\leq\frac{|A+B|^4}{|A|^3}.
\]
\end{proof}

\begin{proof}[Proof of Theorem \ref{main11}:] Suppose $|A|\preceq p^{0.5},
|B|\preceq p^{0.5}$. Choose  $j_0\leq\lceil\log_2|B|\rceil$ so that
\[
|A_{j_0}|2^{j_0}\succeq\frac{|A||B|^2}{|AB|\cdot\log_2|B|}.
\]
By Proposition \ref{3333proposition} (c),
\[\frac{|A|^4|B|^8}{|AB|^4\cdot(\log_2|B|)^4}\preceq|A_{j_0}|^4\cdot16^{j_0}\leq|A|\cdot|A_{j_0}|^3\cdot16^{j_0}
\preceq\frac{|A+B|^{10}}{|A|^2|B|},\] which yields
\[|A+B|^{10}|AB|^4\succeq\frac{|A|^6|B|^9}{(\log_2|B|)^4}.\]
By symmetry,
\[|B+A|^{10}|BA|^4\succeq\frac{|B|^6|A|^9}{(\log_2|A|)^4}.\]
This proves Theorem \ref{main11}.
\end{proof}

\begin{proof}[Proof of Theorem \ref{main33}:]   Suppose $|A|\succeq p^{0.5},
|B|\succeq p^{0.5}$. Choose $j_0\leq\lceil\log_2|B|\rceil$ so that
\[
|A_{j_0}|2^{j_0}\succeq\frac{|A||B|^2}{|AB|\cdot\log_2|B|}.
\]
There are two cases to consider.

($\spadesuit$) Suppose $\frac{A_{j_0}-A_{j_0}}{A_{j_0}-A_{j_0}}\neq
\mathbb{F}_p$. By Lemma \ref{Konyaginababcd}, $|A_{j_0}|\preceq
p^{0.5}$. By Proposition \ref{3333proposition} (c),
\[|A_{j_0}|^316^{j_0}\preceq\frac{|A+B|^{10}}{|A|^3|B|}.\]
Consequently,
\[\frac{|A|^{4}|B|^8}{|AB|^4\cdot(\log_2|B|)^4}\preceq16^{j_0}|A_{j_0}|^4=16^{j_0}|A_{j_0}|^3
\cdot|A_{j_0}|\preceq\frac{|A+B|^{10}}{|A|^3|B|}\cdot p^{0.5},\]
which yields
\[|A+B|^{10}|AB|^4\succeq\frac{|A|^7|B|^9}{p^{0.5}\cdot(\log_2|B|)^4}.\]
By symmetry,
\begin{equation}\label{qwer}|B+A|^{10}|BA|^4\succeq\frac{|B|^7|A|^9}{p^{0.5}\cdot(\log_2|A|)^4}.\end{equation}

($\clubsuit$)  Suppose $\frac{A_{j_0}-A_{j_0}}{A_{j_0}-A_{j_0}}=
\mathbb{F}_p$. If $|A_{j_0}|\preceq p^{0.5}$, then follow the
analysis in ($\spadesuit$) to obtain (\ref{qwer}). Next suppose
$|A_{j_0}|\succeq p^{0.5}$, then by Proposition
\ref{3333proposition} (b),
\[p16^{j_0}\preceq\frac{|A+B|^8}{|A|^3}.\]
Hence
\[\frac{|B|^{8}}{|AB|^4\cdot(\log_2|B|)^4}\leq
\frac{|B|^8\cdot|A|^{4}}{|AB|^4\cdot(\log_2|B|)^4\cdot|A_{j_0}|^4}
\preceq16^{j_0}\preceq\frac{|A+B|^8}{p|A|^3},\] which yields
\[|A+B|^8|AB|^4\succeq\frac{p|A|^3|B|^8}{(\log_2|B|)^4}.\]
By symmetry,
\begin{equation}\label{19}|B+A|^8|BA|^4\succeq\frac{p|B|^3|A|^8}{(\log_2|A|)^4}.\end{equation}
Thus Theorem \ref{main33} follows  from (\ref{qwer}) and (\ref{19}).
\end{proof}

\begin{proof}[Proof of Theorem \ref{main22}:]  Suppose $|A|\sim
|B|\preceq p^{0.5}$. By Proposition \ref{3333proposition} (c),
\[
\max_j16^j|A_j|^3\preceq\frac{|A+B|^{10}}{|A|^4}.
\]
By Lemma \ref{lemmachang},
\[
\max_j16^j|A_j|^3\succeq\frac{E^{\times}(A,B)^4}{|A|^5}\succeq\frac{|A|^{11}}{|AB|^4}.
\]
Consequently,
\[
|A+B|^{10}|AB|^4\succeq|A|^{15}.
\]
This proves Theorem \ref{main22}. \end{proof}

\section*{Acknowledgment}
The author would like to thank Chun-Yen Shen for clarifying some
proofs in this paper. He also thanks Jian Shen and Yaokun Wu for
useful discussions. This work was supported by the Mathematical
Tianyuan Foundation of China (No.~10826088) and Texas Higher
Education Coordinating Board (ARP 003615-0039-2007).


\begin{thebibliography}{99}


\bibitem{Bourgaindifferentsets}
J. Bourgain,\textit{ More on the sum-product phenomenon in prime
fields and its applications}.   Int. J. Number Theory \textbf{1}
(2005) 1--32.

\bibitem{BourgainGaraev}
J. Bourgain, M.~Z. Garaev, \textit{On a variant of sum-product
estimates and explicit exponential sum bounds in prime fields}.
Math. Proc. Cambridge Philos. Soc.  \textbf{146 } (2009) 1--21.


\bibitem{BourgainGlibichukKonyagin}
J. Bourgain, A.~A. Glibichuk, S.~V. Konyagin, \textit{Estimates for
the number of sums and products and for exponential sums in fields
of prime order}.  J. London Math. Soc.  \textbf{73}  (2006)
380--398.

\bibitem{BourgainKatzTao}
J. Bourgain, N. Katz, T. Tao, \textit{A sum-product estimate in
finite fields, and applications}.  Geom. Funct. Anal.  \textbf{14}
(2004) 27--57.


\bibitem{Changmeichu}

M.-C. Chang,\textit{ On product sets in fields of prime order and an
application of Burgess' inequality}. Preprint, 2007.


\bibitem{chang0}
M.-C. Chang,\textit{ Explicit sum-product theorems for large subsets
of $\mathbb{F}_p$}. Preprint, 2008.



\bibitem{Garaevoriginal}
M.~Z. Garaev, \textit{An explicit sum-product estimate in $\Bbb F\sb
p$}. Int. Math. Res. Not.  (2007)  no. 11, Art. ID rnm035.

\bibitem{Garaevlargesubset}
M.~Z. Garaev, \textit{The sum-product estimate for large subsets of
prime fields}.  Proc. Amer. Math. Soc.  \textbf{136}  (2008)
2735--2739.




\bibitem{Glibichuk}
A.~A. Glibichuk, \textit{Combinatorial properties of sets of
residues modulo a prime and the Erd\"{o}s-Graham problem}. (Russian)
Mat. Zametki 79 (2006),  no. 3, 384--395;  translation in  Math.
Notes  79 (2006), no. 3-4, 356--365.


\bibitem{KonyaginGlibichuk}
A.~A. Glibichuk, S.~V. Konyagin, \textit{Additive properties of
product sets in fields of prime order}.  Additive combinatorics,
279--286, CRM Proc. Lecture Notes, 43, Amer. Math. Soc., Providence,
RI, 2007.


\bibitem{Gyarmati}

K. Gyarmati, M. Matolcsi, I.Z. Ruzsa, \textit{Pl\"{u}nnecke's
inequality for different summands}.  Building bridges,  309--320,
Bolyai Soc. Math. Stud., 19, Springer, Berlin, 2008.


\bibitem{HartIosevichSolymosi}
D. Hart, A. Iosevich, J. Solymosi,  \textit{Sum-product estimates in
finite fields via Kloosterman sums}.  Int. Math. Res. Notices (2007)
no. 5, Art. ID rnm007.

\bibitem{KatzShen}
N.~H. Katz, C.-Y. Shen,  \textit{A slight improvement to Garaev's
sum product estimate}.  Proc. Amer. Math. Soc.  \textbf{136} (2008)
2499--2504.

\bibitem{Shendifferentsets}
C.-Y. Shen, \textit{Quantitative sum product estimates on different
sets}. Electron. J. Combin.  \textbf{15}  (2008) Note 40.



\bibitem{Shen}
C.-Y. Shen, \textit{An extension of Bourgain and Garaev's
sum-product estimates}. Acta Arith.  \textbf{135}  (2008) 351--356.

\bibitem{Shen2}
C.-Y. Shen, \textit{On the sum product estimates and two variables
expanders}. accepted by Publ. Math. 2008.

\bibitem{tao}
T. Tao, V.~H. Vu, \textit{Additive Combinatorics}. Cambridge
University Press, 2006.

\bibitem{Vu}
V.~H. Vu, \textit{Sum-product estimates via directed expanders}.
Math. Res. Lett. \textbf{15} (2008) 375--388.











\end{thebibliography}
\end{document}